\documentclass{amsart}

\usepackage[T1]{fontenc}
\usepackage{amsmath,amssymb,amsthm}
\usepackage{mathtools}
\usepackage{cite}
\usepackage[hidelinks]{hyperref}

\newtheorem{theorem}{Theorem}[section]
\newtheorem{proposition}[theorem]{Proposition}
\newtheorem{lemma}[theorem]{Lemma}
\newtheorem{corollary}[theorem]{Corollary}

\theoremstyle{definition}
\newtheorem{definition}[theorem]{Definition}

\theoremstyle{remark}
\newtheorem{remark}[theorem]{Remark}

\numberwithin{equation}{section}

\DeclareMathOperator{\Hom}{Hom}

\DeclareMathOperator{\del}{del}
\DeclareMathOperator{\sddel}{sub\text{-}ddel}
\DeclareMathOperator{\rad}{rad}

\newcommand{\op}{\mathrm{op}}
\newcommand{\Lam}{\Lambda}
\newcommand{\mat}[1]{\begin{pmatrix}#1\end{pmatrix}}
\newcommand{\smat}[1]{\begin{psmallmatrix}#1\end{psmallmatrix}}

\renewcommand{\le}{\leqslant}
\renewcommand{\ge}{\geqslant}

\begin{document}
\title[Derived equivalences and delooping levels]
{Derived equivalences and delooping levels}

\author[Chen]{Liang Chen}

\address{School of Mathematical Sciences\\
	Capital Normal University\\
	Beijing 100048\\
	China}
	
\email{2210501003@cnu.edu.cn}

\subjclass{Primary 16E10, 16E35; Secondary 16G10}
\renewcommand{\subjclassname}{Mathematics Subject Classification (2020)}

\keywords{Derived equivalence, Delooping level, Sub-derived delooping level, One-point extension, One-point coextension}

\date{}

\begin{abstract}
We construct a finite-dimensional algebra derived equivalent to the example of Kershaw--Rickard. 
For the Kershaw--Rickard example the delooping level and the sub-derived delooping level are both infinite, while for our algebra both invariants are $0$. 
Thus the finiteness of these invariants is not preserved under derived equivalences.
\end{abstract}

\maketitle

\section{Introduction}
Motivated by attempts to resolve the finitistic dimension conjecture \cite{Bass60},
G\'elinas introduced in~\cite{Gelinas22Adv} the \emph{delooping level} $\del$,
a syzygy-theoretic invariant that bounds the big finitistic dimension of the opposite algebra.
Guo and Igusa~\cite{GuoIgusa25} later refined this notion by introducing, among other refinements,
the \emph{sub-derived delooping level} $\sddel$, which provides sharper homological bounds.

Derived equivalences often preserve homological finiteness properties---for instance, finiteness of the global dimension and of the little and big finitistic dimensions; see, e.g., \cite{Rickard89,Happel88,Kato02,Keller92,KellerKrause20,PanXi09,ChenXi17}.
Xi--Zhang therefore conjectured that finiteness of $\del$ is invariant under derived equivalences among Noetherian rings \cite[Conjecture in Section~5.2]{XiZhang26}.
In this note, we give a counterexample: the conjecture fails already for finite-dimensional algebras (and hence for Noetherian rings).
The analogous finiteness statement fails for $\sddel$ as well. Our main result is the following.

\begin{theorem}\label{thm:main}
There exist derived equivalent finite-dimensional algebras $B$ and $C$ over a field $k$ such that
\[
\del(B)=\sddel(B)=\infty
\qquad\text{and}\qquad
\del(C)=\sddel(C)=0.
\]
\end{theorem}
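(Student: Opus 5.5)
The plan is to take $B$ to be the Kershaw--Rickard algebra, for which $\del(B)=\infty$ was established by Kershaw--Rickard. Since $\sddel\ge\del$ in general (Guo--Igusa), this already gives $\sddel(B)=\infty$, and nothing new is needed on the $B$ side. The real work is constructing $C$. I would build it using one-point (co)extensions, as suggested by the keywords. The reason is that a one-point extension $A[M]$ and the one-point coextension $[N]A$ are derived equivalent under suitable conditions; this is Barot--Lenzing's result on one-point extensions and derived equivalence. Concretely, if $M$ is an $A$-module and $N$ corresponds to $M$ under the Serre functor/Nakayama shift in $D^b(A)$, then $A[M]$ and $[\nu M]A$ are derived equivalent.

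The steps are as follows.

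\textbf{Step 1.} I would first present the Kershaw--Rickard algebra $B$ as a one-point extension $B=A[M]$, or as a coextension. This is done by choosing a source or sink vertex in its quiver and setting $A=eBe$ with the corresponding radical module $M$.

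\textbf{Step 2.} Next I would apply the Barot--Lenzing reflection to obtain $C=[N]A$. If the complex $\nu M$ is not a module, I would instead adjust the choice of vertex, or replace $M$ by a shift so that the complex is concentrated in one degree. Alternatively, I would realise the equivalence via an explicit APR-type tilting complex $T=(\text{simple-at-vertex reflection})\oplus\bigoplus_{i\ne v}P_i$ and compute $C=\operatorname{End}(T)^{\op}$ by quiver and relations.

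\textbf{Step 3.} With $C$ in hand, I would show $\del(C)=0$. By definition this means every simple $C$-module $S$ is a direct summand of $\Omega^{n}N\oplus$ projectives for some $n$ and some module $N$ (delooping level $0$ at $S$), i.e.\ $S$ itself is a stable retract of high syzygies. It is enough to verify this for each simple. The projective simples are trivial. For the others, I would exhibit, using the radical structure of the indecomposable projectives of $C$, that each non-projective simple $S$ appears as a summand of $\Omega^{m}X$ for all $m\ge 1$. The natural way to do this is to find a periodic or self-reproducing syzygy pattern, such as a module $X$ with $\Omega X\cong S\oplus X'$ where $X'$ again has this property.

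\textbf{Step 4.} Finally, $\sddel(C)\le\del(C)=0$ gives $\sddel(C)=0$. Here I am assuming Guo--Igusa's inequality $\sddel\le\del$; if the inequality goes the other way, I would instead verify directly the sub-derived condition, namely that each simple is a submodule-of-syzygy summand in the derived sense.

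I expect the main obstacle to be Step 3 together with the choice of extension vertex. In the Kershaw--Rickard algebra the failure of $\del$ comes from a simple module that cannot be recovered from deep syzygies. The reflection has to turn that obstruction into a module that does occur in infinitely deep syzygies, for example by making the relevant projective have that simple in its socle. I would try first the vertex adjacent to the problematic simple and compute syzygies of the new simples by hand, checking at most a few steps until a repeating pattern appears.
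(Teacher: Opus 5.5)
\textbf{Overall route.} Your top-level route is the paper's. You view the Kershaw--Rickard algebra as the one-point extension of the local algebra $\Lambda$ by $M(q)$ and pass to the one-point coextension; the paper does this via Ladkani's triangular-matrix form of Barot--Lenzing. That theorem relates $A[M]$ and $[M]A$ for the \emph{same} module $M$, not $[\nu M]A$. With $A=\Lambda$ and $M=M(q)$ it gives $C=\smat{\Lambda&0\\ D(M(q))&k}$. Your Step~2, phrased with $\nu M$ and a fallback of ``adjusting the vertex'', never pins down a specific $C$. Beyond that, there are two genuine gaps.

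\textbf{First gap: the inequality on the $B$ side is backwards.} The infimum defining $\sddel_A(M)$ includes the embedding $M\hookrightarrow M$, so $\sddel\le\del$ always. This is the direction you use in Step~4, and you cannot use both directions. Hence $\del(B)=\infty$ does not give $\sddel(B)=\infty$, which is the strictly stronger assertion. It must be cited from Guo--Igusa's analysis of this example (their Example~3.8) or proved separately.

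\textbf{Second and main gap: Step~3 is not carried out.} This step is the actual content of the theorem. Without an explicit $C$, ``look for a periodic syzygy pattern'' is a search strategy, not an argument, and nothing guarantees in advance that the reflected algebra has $\del=0$.

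The strategy also aims too high. $\del_C(S)=0$ only requires $S$ to be a stable summand of a \emph{first} syzygy. For this it suffices that $S$ embed in the radical of a projective $P$: then $P\to P/S$ is a projective cover and $S\cong\Omega_C(P/S)$.

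In $C$ the coextension vertex gives $P_2=Ce_2=S_2$, a simple projective, so only $S_1$ matters. The missing idea is to find an element of the left socle of $\Lambda$ that annihilates $M(q)$. The element $yx$ does this:
\begin{itemize}
\item $x,y,z$ kill $yx$ from the left;
\item $(yx)v=y(qv')=0$ and $(yx)v'=(yx)v''=0$;
\item therefore $D(M(q))\cdot yx=0$.
\end{itemize}
Hence $k\cdot\smat{yx&0\\0&0}$ is a one-dimensional $C$-submodule of $P_1=Ce_1$. It is isomorphic to $S_1$ and contained in $\rad P_1$, so $S_1\cong\Omega_C(P_1/S_1)$. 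This gives $\del(C)=0$, and then $\sddel(C)=0$ follows from the correct inequality $\sddel\le\del$.
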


The proof of Theorem~\ref{thm:main} is given in the next section.

\section{Proof of Theorem~\ref{thm:main}}\label{sec:proof}
Throughout, $k$ denotes a field.
By an \emph{algebra} we mean a finite-dimensional $k$-algebra, and by a \emph{module} we mean a finitely generated
\emph{left} module unless explicitly stated otherwise.
For an algebra $A$, let $A\text{-}\mathrm{mod}$ be the category of finitely generated left $A$-modules and
$A\text{-}\mathrm{Mod}$ the category of all left $A$-modules.
We write $D^b(A):=D^b(A\text{-}\mathrm{Mod})$ for the bounded derived category. Two algebras $A$ and $A'$ are \emph{derived equivalent} if there exists a triangulated equivalence
$D^b(A)\simeq D^b(A')$; see \cite[Theorem~6.4]{Rickard89}.

Let $A$ be an algebra and write $A\text{-}\underline{\mathrm{mod}}$ for the
stable module category, that is, the quotient of $A\text{-}\mathrm{mod}$ by the
ideal of morphisms factoring through projective modules.
For $X,Y\in A\text{-}\underline{\mathrm{mod}}$, $X$ is a direct summand of $Y$
in $A\text{-}\underline{\mathrm{mod}}$ if and only if there exists a projective
$A$-module $P$ such that $X$ is a direct summand of $Y\oplus P$ in
$A\text{-}\mathrm{mod}$.
For $M\in A\text{-}\mathrm{mod}$, let $\Omega_A(M)$ denote the kernel of a
projective cover $P\twoheadrightarrow M$. Set $\Omega_A^0(M):=M$ and define
$\Omega_A^{n+1}(M):=\Omega_A(\Omega_A^n(M))$ for $n\ge0$.
The modules $\Omega_A^n(M)$ are well defined up to isomorphism.

We now recall the definitions of the delooping level and the sub-derived delooping level.

\begin{definition}[\textnormal{\cite[Definition~1.2]{Gelinas22Adv}}]\label{def:del}
Let $M \in A\text{-}\mathrm{mod}$. The \emph{delooping level} of $M$ is defined by
\[
\del_A(M):=\inf\Bigl\{\,n\ge 0 \ \Big|\ 
\begin{aligned}[t]
	&\Omega_A^n(M)\text{ is a direct summand of }\Omega_A^{n+1}(N)\\
	&\text{in }A\text{-}\underline{\mathrm{mod}},\ \text{for some }N\in A\text{-}\mathrm{mod}
\end{aligned}
\Bigr\}.
\]
If no such $n$ exists, we set $\del_A(M):=\infty$. The delooping level of $A$ is
\[
\del(A):=\sup\{\del_A(S)\mid S\text{ is a simple }A\text{-module}\}.
\]
\end{definition}

\begin{definition}[\textnormal{\cite[Remark~2.16]{GuoIgusa25}}]\label{def:sddel}
Let $M \in A\text{-}\mathrm{mod}$. 
The \emph{sub-derived delooping level} of $M$ is
\[
\sddel_A(M):=\inf\{\del_A(N)\mid M\hookrightarrow N \text{ in } A\text{-}\mathrm{mod}\}.
\]
If $M$ does not embed into any module of finite delooping level, this infimum is $\infty$.
The sub-derived delooping level of $A$ is
\[
\sddel(A):=\sup\{\sddel_A(S)\mid S \text{ is a simple } A\text{-module}\}.
\]
\end{definition}

\begin{remark}\label{rem:sddel-le-del}
Since $M\hookrightarrow M$, one has $\sddel_A(M)\le \del_A(M)$ for all $M$.
\end{remark}

We now recall the Kershaw--Rickard example.
Fix a field $k$ containing an element $q\in k^{\times}$ of infinite multiplicative order, that is,
$q^n\neq 1$ for all $n\ge 1$ (for instance, $k=\mathbb{Q}$ and $q=2$).
Ringel--Zhang \cite[\S6]{RZ20b} define the following local algebra and modules.
Let
\[
\Lam:=k\langle x,y,z\rangle
\big/
\bigl(
 x^2,\ y^2,\ z^2,\ yz,\ xy+qyx,\ xz-zx,\ zy-zx
\bigr).
\]
For $\alpha\in k$, let $M(\alpha)$ be the $k$-vector space with basis $v, v', v''$ and left $\Lam$-action
\[
 xv=\alpha v',\qquad yv=v',\qquad zv=v'',
\]
and $xv'=yv'=zv'=xv''=yv''=zv''=0$.
We write $M(q)$ for the special module at $\alpha=q$.
Set
\[
B:=\mat{k & 0\\[2pt] M(q) & \Lam}.
\]
Up to opposite rings, this is the Kershaw--Rickard example~\cite{KR24}.

\begin{lemma}[\textnormal{{\cite[Corollary~4.2]{KR24}, \cite[Example~3.8]{GuoIgusa25}}}]
	\label{lem:B-infinite}
	$\del(B)=\sddel(B)=\infty$.
\end{lemma}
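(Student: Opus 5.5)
The plan is to reduce everything to the simple module at the extension vertex, and then use the Ringel--Zhang description of syzygies of the modules $M(\alpha)$ over $\Lam$, which is what drives the Kershaw--Rickard computation.

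\textbf{Setting up $S_1$.} Write left $B$-modules as triples $(V_1,V_2,\varphi\colon M(q)\otimes_k V_1\to V_2)$. Here $V_1$ is a $k$-space, $V_2$ is a $\Lam$-module and $\varphi$ is $\Lam$-linear. The indecomposable projectives are $P_1=(k,M(q),\mathrm{id})$ and $P_2=(0,\Lam,0)$. For the simple $S_1=(k,0,0)$ we get $\Omega_B(S_1)\cong(0,M(q),0)$. Since $(0,-,0)$ identifies $\Lam\text{-}\mathrm{mod}$ with a full subcategory of $B\text{-}\mathrm{mod}$ that is closed under syzygies and contains $P_2$, we obtain
\[
\Omega_B^{n}(S_1)\cong\Omega_\Lam^{n-1}(M(q))\quad\text{for } n\ge1 .
\]
The simple $S_2$ is a $\Lam$-module, and it is easy to handle. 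So by Remark~\ref{rem:sddel-le-del} it suffices to show $\sddel_B(S_1)=\infty$, and the first goal is the weaker statement $\del_B(S_1)=\infty$.

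\textbf{The key obstruction.} Next I will recall from \cite[\S6]{RZ20b} how the family $M(\alpha)$ behaves under syzygies. Roughly, $\Omega_\Lam M(\alpha)$ is built from $M(q^{\pm1}\alpha)$, up to projective and easily controlled summands. Moreover, $M(\alpha)$ occurs as a stable summand of some $\Omega_\Lam(N)$ only when $\alpha$ avoids a specific value; this is the ``$M(q)$ is torsionless but not $\Omega$-periodic'' phenomenon. Iterating, $\Omega_\Lam^{n-1}M(q)$ has an indecomposable non-projective summand $M(q^{\,c\pm n})$. Because $q$ has infinite multiplicative order, these parameters are pairwise distinct, and this should rule out $M(q^{c\pm n})$ being a stable summand of $\Omega_B^{n+1}(N)$ for every $N$. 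To prove this last point I would analyse $\Omega_B^{n+1}(N)$. After one step it lies in the $\Lam$-part, and equals $\Omega_\Lam^{n}$ of some $\Lam$-module $N'$. The exact statement needed is that $M(q^{c\pm n})$ is not a stable summand of $\Omega_\Lam^{n}(N')$. This is precisely \cite[Corollary~4.2]{KR24}, and I expect it to be the main obstacle if done from scratch: one must compute the ``$n$-th syzygy closure'' over $\Lam$ via the Ringel--Zhang classification of $\Omega$-images.

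\textbf{Passing to $\sddel$.} Suppose $S_1\hookrightarrow N$. Then $N=(V_1,V_2,\varphi)$ contains a vector $v\in V_1$ with $\varphi(M(q)\otimes v)=0$. I will split off the summand generated by this $v$: choosing a complement of $kv$ in $V_1$ compatibly with $\varphi$, we get $N\cong S_1\oplus N''$. Then $\Omega^n_B(N)$ contains $\Omega^n_B(S_1)$ as a summand for every $n$. The same obstruction as above (equivalently \cite[Example~3.8]{GuoIgusa25}) then shows $\del_B(N)=\infty$. The delicate point is that the splitting may fail, since $\varphi$ restricted to $M(q)\otimes V_1$ need not respect a chosen complement. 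If it fails, I would instead argue directly that every $\Omega^n_B(N)$ still has $M(q^{c\pm n})$ as a stable summand coming from the kernel of $\varphi$, and then conclude as above.
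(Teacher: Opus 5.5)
The paper gives no argument of its own for this lemma. It quotes $\del(B)=\infty$ from \cite[Corollary~4.2]{KR24} and $\sddel(B)=\infty$ from \cite[Example~3.8]{GuoIgusa25}.

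For the $\del$ half you do essentially the same thing. Your setup is correct: $\Omega_B(S_1)\cong(0,M(q),0)$, $\Omega_B^{n}(S_1)\cong\Omega_\Lam^{n-1}M(q)$, and $\Omega_B^{n+1}(N)=\Omega_\Lam^{n}(\Omega_B N)$. The real obstruction is again delegated to Kershaw--Rickard.

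For the $\sddel$ half you take a more elementary route: you deduce $\sddel_B(S_1)=\infty$ from $\del_B(S_1)=\infty$, which makes the Guo--Igusa citation unnecessary. This works, and the ``delicate point'' you raise does not arise. Since $e_1B=ke_1$ is one-dimensional, $S_1\cong D(e_1B)$ is injective, so every embedding $S_1\hookrightarrow N$ splits. Concretely, for \emph{any} complement $V_1'$ of $kv$ in $V_1$, the triple $(V_1',V_2,\varphi|)$ is a submodule and $N=(kv,0,0)\oplus(V_1',V_2,\varphi|)$. You also do not need to rerun the obstruction. $\Omega_B^n(S_1)$ is a direct summand of $\Omega_B^n(N)$, so any $L$ with $\Omega_B^n(N)$ a stable summand of $\Omega_B^{n+1}(L)$ also witnesses $\del_B(S_1)\le n$. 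Hence $\del_B(N)\ge\del_B(S_1)=\infty$, and you can drop the fallback.

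Your recollection of the Ringel--Zhang modules should be corrected, even though no logical step rests on it.
\begin{itemize}
\item For $\alpha\ne1$, the kernel of $\Lam\to M(\alpha)$, $1\mapsto v$, is $\Lam(x-\alpha y)$. The element $w=x-\alpha y$ satisfies $xw=q\alpha\,yx$, $yw=yx$ and $zw=(1-\alpha)zx$, so $\Omega_\Lam M(\alpha)\cong M(q\alpha)$ exactly. Thus $\Omega_B^n(S_1)\cong M(q^n)$, with no unknown offset and no extra summands.
\item $M(q)$ is \emph{not} torsionless, contrary to your parenthetical. Every $\Lam$-map $M(q)\to\Lam$ sends $v$ to some $\lambda(x-y)+\mu\,yx+\nu\,zx$. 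This element is killed by $z$, because $zy=zx$ and $x^2=z^2=0$, so the map kills $zv\neq0$. Since $\Omega_B^2(N)$ is a $\Lam$-syzygy and hence torsionless, this is exactly the case $n=1$ of the obstruction.
\item The $\Lam$-level statement you attribute to Kershaw--Rickard is quantified over all $\Lam$-modules $N'$. That is stronger than you need, since only modules of the form $N'=\Omega_B(N)$ occur. It is cleaner to cite $\del_B(S_1)=\infty$ directly, as the paper does.
\end{itemize}
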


Define
\[
C:=\mat{\Lam & 0\\[2pt] D(M(q)) & k}.
\]
Here $D(-):=\Hom_k(-,k)$.

We first show that $B$ and $C$ are derived equivalent.
To this end we use the following two lemmas.

\begin{lemma}\label{lem:swap-triangular}
	Let $R,S$ be rings and ${}_S N_R$ a bimodule.
	Then there is a ring isomorphism
	\[
	\mat{S & N\\ 0 & R}\;\cong\;\mat{R & 0\\ N & S}.
	\]
\end{lemma}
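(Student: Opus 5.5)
The plan is to write down the obvious candidate map and check directly that it is a ring isomorphism. The candidate is
\[
\varphi\colon \mat{S & N\\ 0 & R}\longrightarrow \mat{R & 0\\ N & S},\qquad
\mat{s & n\\ 0 & r}\longmapsto \mat{r & 0\\ n & s}.
\]
First I fix the ring structures. On the left, the product uses the left $S$-action and right $R$-action on $N$:
\[
\mat{s & n\\ 0 & r}\mat{s' & n'\\ 0 & r'}=\mat{ss' & sn'+nr'\\ 0 & rr'}.
\]
On the right-hand ring $\mat{R & 0\\ N & S}$, the $(2,1)$ entry of a product is computed as $n r' + s n'$. For this to make sense I read $N$ with its right $R$-module structure (multiplied on the right by the $(1,1)$ entry) and its left $S$-module structure (multiplying on the left the $(2,1)$ entry from the $(2,2)$ entry). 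This is exactly the ${}_S N_R$ bimodule structure, so both triangular rings are well defined.

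Next I verify the ring axioms for $\varphi$. It is clearly bijective and additive, and it sends the identity $\smat{1&0\\0&1}$ to the identity. For multiplicativity, compute
\[
\varphi(a)\varphi(a')=\mat{r & 0\\ n & s}\mat{r' & 0\\ n' & s'}=\mat{rr' & 0\\ nr'+sn' & ss'},
\]
which agrees with $\varphi(aa')=\smat{rr' & 0\\ sn'+nr' & ss'}$, since addition in $N$ is commutative.

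There is no real obstacle here. The only point needing care is the convention for how $N$ sits in the lower-triangular matrix ring: one must confirm that the bimodule actions used there are the given ${}_SN_R$ ones. In particular, $\mat{R&0\\N&S}$ requires $N$ to be an $S$-$R$-bimodule, which matches the hypothesis. In the intended application this gives $B\cong\smat{\Lambda & M(q)\\ 0 & k}$, with $M(q)$ viewed as a $\Lambda$-$k$-bimodule.
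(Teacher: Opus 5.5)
Your proof is correct. The map $\smat{s&n\\0&r}\mapsto\smat{r&0\\n&s}$ is additive, bijective and unital. Your multiplicativity check is also right: the $(2,1)$ entry $nr'+sn'$ of the lower-triangular product uses exactly the right $R$-action and left $S$-action of ${}_SN_R$.

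The paper takes a more structural route. It identifies $A=\smat{S&N\\0&R}$ with $\operatorname{End}_A(Ae_1\oplus Ae_2)^{\op}$ and notes that listing the summands as $Ae_2\oplus Ae_1$ gives the same ring. It then reads off the form $\smat{R&0\\N&S}$ from $\Hom_A(Ae_i,Ae_j)\cong e_iAe_j$. In other words, it uses the Peirce decomposition with respect to the reordered idempotents $(e_2,e_1)$. Under these identifications the paper's isomorphism sends $a$ to $\smat{e_2ae_2&e_2ae_1\\e_1ae_2&e_1ae_1}=\smat{r&0\\n&s}$, so it is precisely your map.

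What each approach buys:
\begin{itemize}
\item The paper's version is shorter and explains why such a swap must exist: it is just a relabelling of a complete set of orthogonal idempotents. However, it leaves implicit how endomorphisms become matrix entries and how the $\op$ affects their placement.
\item Your direct computation makes the isomorphism explicit and checks the bimodule conventions directly. That check is the only place anything could go wrong.
\end{itemize}

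Your remark on the application ($S=\Lambda$, $R=k$, $N=M(q)$, giving $B\cong\smat{\Lambda&M(q)\\0&k}$) also matches the paper's use of the lemma to obtain $T_2\cong B$.
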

\begin{proof}
	Set
	\[
	A:=\mat{S & N\\ 0 & R},
	\quad
	A':=\mat{R & 0\\ N & S},
	\quad
	e_1=\mat{1&0\\0&0},
	\quad
	e_2=\mat{0&0\\0&1}\in A.
	\]
	Since ${}_A(Ae_1\oplus Ae_2)\cong {}_A(Ae_2\oplus Ae_1)$, we obtain
	\[
	A\cong \operatorname{End}_A(Ae_1\oplus Ae_2)^{\op}
	\cong
	\operatorname{End}_A(Ae_2\oplus Ae_1)^{\op}
	\cong A'. \qedhere
	\]
\end{proof}

\begin{lemma}[\textnormal{Ladkani \cite[Remark~4.13]{Ladkani11}}]\label{lem:Ladkani}
Let $R$ be a ring and $S$ a division ring. Let ${}_S N_R$ be a bimodule, finite-dimensional as a left $S$-vector space.
Let $D(-):=\Hom_S(-,S)$. Then the upper triangular matrix rings
\(\mat{S & N\\ 0 & R}\) and \(\mat{R & D(N)\\ 0 & S}\)
are derived equivalent.
\end{lemma}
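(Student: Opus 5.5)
The plan is to apply Rickard's Morita theory for derived categories: I will exhibit a tilting complex $T$ over $A:=\smat{S & N\\ 0 & R}$ whose endomorphism ring is $\smat{R & D(N)\\ 0 & S}$, up to the usual passage to opposite rings, which Lemma~\ref{lem:swap-triangular} takes care of. Write $e_1,e_2$ for the diagonal idempotents, $P_1:=Ae_1$ and $P_2:=Ae_2$. The basic data are
\[
\Hom_A(P_1,P_2)\cong e_1Ae_2=N,\qquad \Hom_A(P_2,P_1)\cong e_2Ae_1=0,
\]
\[
\operatorname{End}_A(P_1)\cong S^{\op},\qquad \operatorname{End}_A(P_2)\cong R^{\op}.
\]
So $P_1$ is the "simple" projective corner and $N$ measures all maps $P_1\to P_2$.

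\textbf{Construction.} Since $N$ is finite-dimensional as a left $S$-vector space, say with basis $n_1,\dots,n_d$, the evaluation map
\[
f\colon P_1\otimes_S N\cong P_1^{d}\longrightarrow P_2
\]
is a right $\operatorname{add}(P_1)$-approximation of $P_2$. In fact, composition with $f$ induces an isomorphism $\Hom_A(P_1,P_1\otimes_S N)\cong N\to \Hom_A(P_1,P_2)\cong N$. Let $X$ be the two-term complex $P_1\otimes_S N\xrightarrow{f}P_2$, with $P_2$ in degree $0$, and set
\[
T:=X\oplus P_1[1].
\]
This is the familiar APR/Okuyama-type tilt at the vertex of $S$.

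\textbf{Verification.} I will carry out four checks.

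(i) $\Hom_{K^b(A)}(T,T[i])=0$ for $i\ne0$. Since every complex involved lives in degrees $-1,0$, only $i=\pm1$ need attention. For $\Hom(P_1[1],X[i])$, the only candidate is a map $g\colon P_1\to P_1\otimes_S N$ with $fg=0$. Because $f\circ-$ is injective on $\Hom(P_1,P_1\otimes_S N)$, we get $g=0$. For $\Hom(X,P_1[1+i])$ and for the self-maps of $X$, every nullhomotopy or obstruction passes through $\Hom(P_2,P_1)=0$ or through $\Hom(P_2,P_1\otimes_S N)=0$, so these groups vanish in the relevant degrees.

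(ii) $T$ generates $K^b(A\text{-proj})$. The thick subcategory generated by $T$ contains $P_1$, hence $P_1\otimes_S N$, and therefore also $P_2$ as the cone of $P_1\otimes_S N\to X$.

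(iii) The endomorphisms of the two summands. A chain endomorphism of $X$ is a pair $(a,b)$ with $fa=bf$. Since $f\circ-$ is bijective on maps out of $P_1$, the component $a$ is determined by $b\in\operatorname{End}(P_2)\cong R^{\op}$, and every homotopy factors through $\Hom(P_2,P_1\otimes_S N)=0$. Hence $\operatorname{End}(X)\cong R^{\op}$. Clearly $\operatorname{End}(P_1[1])\cong S^{\op}$.

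(iv) The off-diagonal terms. We have $\Hom(P_1[1],X)=0$ by (i). On the other side,
\[
\Hom(X,P_1[1])\cong \Hom_A(P_1\otimes_S N,P_1)\cong \Hom_S(N,S)=D(N),
\]
because no homotopy can be nonzero: it would have to pass through $\Hom(P_2,P_1)=0$.

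\textbf{Conclusion.} By (iii) and (iv), $\operatorname{End}(T)^{\op}$ is a triangular matrix ring with diagonal entries $R$ and $S$ and off-diagonal bimodule $D(N)$. By Lemma~\ref{lem:swap-triangular}, after reordering the summands it is $\smat{R & D(N)\\ 0 & S}$. Rickard's theorem then gives the derived equivalence.

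\textbf{Main obstacle.} I expect the hardest part to be the bookkeeping in (iii)--(iv). One must check that the $R$--$S$-bimodule structure on $\Hom(X,P_1[1])$ induced by composition is exactly that of $D(N)=\Hom_S(N,S)$, with the correct sides, after all the opposite-ring identifications. I would settle this by identifying $\Hom_A(P_1\otimes_S N,P_1)$ with $\Hom_S(N,\operatorname{End}_A(P_1)^{\op})$ explicitly via the basis $n_1,\dots,n_d$, and then tracking the actions of $R$ through $b$ and of $S$ through post-composition.
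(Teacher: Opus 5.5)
Your strategy is sound, and it takes a different route from the paper. The paper gives no argument for this lemma and simply cites Ladkani's remark.

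However, step (i) as written does not check the vanishing that actually matters. Since $T$ is concentrated in degrees $-1,0$, the case $i=-1$ reduces to $\Hom_A(T^0,T^{-1})=\Hom_A(P_2,\operatorname{add}P_1)=0$, and that is all your blanket remark covers. The substantive case is $i=1$. A chain map $T\to T[1]$ is an arbitrary $A$-map $T^{-1}=(P_1\otimes_S N)\oplus P_1\to T^0=P_2$, and you must show it is null-homotopic. The relevant homotopies $h^{-1}\colon T^{-1}\to T^{-1}$ contribute $\pm f\circ h^{-1}$. Consequently:
\begin{itemize}
\item $\Hom(P_1[1],X[1])=0$ is exactly the surjectivity of $f\circ-\colon\Hom_A(P_1,P_1\otimes_SN)\to\Hom_A(P_1,P_2)$;
\item $\Hom(X,X[1])=0$ follows from the same surjectivity on maps out of $P_1\otimes_SN\in\operatorname{add}P_1$.
\end{itemize}
You address neither. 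The map $g\colon P_1\to P_1\otimes_SN$ with $fg=0$ that you analyse is the case $i=0$, which belongs in (iv). Your claim that every obstruction for self-maps of $X$ passes through $\Hom(P_2,\operatorname{add}P_1)$ is false for $\Hom(X,X[1])$.

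A quick test shows the written checks cannot suffice. Take $T'=P_2\oplus P_1[1]$, i.e.\ your construction with $N$ replaced by $0$. Every argument you give in (i) and (ii) still goes through, yet when $N\neq0$, $\Hom(T',T'[1])$ contains the summand $\Hom_{K^b(A)}(P_1[1],P_2[1])\cong\Hom_A(P_1,P_2)\cong N\neq0$. The repair is immediate: you already recorded that $f\circ-$ is bijective on maps out of $\operatorname{add}P_1$, so just use its surjectivity in (i).

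With that fixed, your proof is a complete, self-contained replacement for the citation, resting only on Rickard's theorem. The bimodule check you flag also works out:
\begin{itemize}
\item Right multiplication by $r\in R$ on $P_2$ induces $1\otimes(-\cdot r)$ on $P_1\otimes_SN$, so precomposition sends $\phi\in\Hom_S(N,S)$ to $\phi(-\cdot r)$.
\item Postcomposition with right multiplication by $s$ on $P_1$ sends $\phi$ to $\phi(-)s$.
\end{itemize}
These are the natural actions on ${}_R D(N)_S$, giving $\operatorname{End}(T)^{\op}\cong\smat{R&D(N)\\0&S}$.

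The direct argument also shows exactly where the hypotheses enter. Finite generation of ${}_SN$ is what places $P_1\otimes_SN$ in $\operatorname{add}P_1$, so that $T\in K^b(A\text{-proj})$. You in fact only use that ${}_SN$ is finitely generated projective, not that $S$ is a division ring. What the paper's citation buys instead is brevity and access to Ladkani's more general framework, at the cost of self-containment.
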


\begin{corollary}\label{cor:derived-equivalent}
The algebras $B$ and $C$ are derived equivalent.
\end{corollary}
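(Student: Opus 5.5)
We obtain the equivalence by passing through upper triangular matrix rings, applying Ladkani's result (Lemma~\ref{lem:Ladkani}), and then converting back to lower triangular form with Lemma~\ref{lem:swap-triangular}.

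\emph{Step 1: put $B$ in upper triangular form.} Here $M(q)$ is a left $\Lam$-module, and it is a right $k$-module via scalars, so it is a $\Lam$-$k$-bimodule ${}_\Lam M(q)_k$. In $B=\smat{k&0\\ M(q)&\Lam}$ the corner $M(q)$ is multiplied by $\Lam$ on the left and by $k$ on the right. Apply Lemma~\ref{lem:swap-triangular} with $S=\Lam$, $R=k$ and $N=M(q)$. It gives
\[
\mat{\Lam & M(q)\\ 0 & k}\cong \mat{k & 0\\ M(q) & \Lam}=B.
\]
This has the form $\smat{S&N\\0&R}$ of Lemma~\ref{lem:Ladkani}, except that there $S$ is required to be a division ring and $N$ finite-dimensional over it.

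\emph{Step 2: apply Ladkani on the correct side.} Here $S=\Lam$ is local, not a division ring, so we cannot use Lemma~\ref{lem:Ladkani} with $S=\Lam$. I expect this bookkeeping to be the main obstacle. To get the division ring into the $S$-slot, consider the opposite algebra
\[
B^{\op}\cong \mat{k & M(q)\\ 0 & \Lam^{\op}},
\]
with $M(q)$ regarded as a $k$-$\Lam^{\op}$-bimodule. Now $S=k$ is a field, $R=\Lam^{\op}$, and $M(q)$ is $3$-dimensional over $k$. Lemma~\ref{lem:Ladkani} then makes $B^{\op}$ derived equivalent to
\[
\mat{\Lam^{\op} & D(M(q))\\ 0 & k},
\]
where $D(M(q))=\Hom_k(M(q),k)$ is a $\Lam^{\op}$-$k$-bimodule, i.e.\ a right $\Lam^{\op}$-module. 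Taking opposites back, this ring is
\[
\mat{\Lam & 0\\ D(M(q)) & k}^{\op}=C^{\op},
\]
up to the transposition identification. In this identification one checks that the right $\Lam^{\op}$-action on $D(M(q))$ is exactly the right $\Lam$-action $(f\lambda)(m)=f(\lambda m)$ used in $C$, where $D(M(q))$ sits as a $k$-$\Lam$-bimodule.

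\emph{Step 3: remove the opposites.} A derived equivalence $D^b(B^{\op})\simeq D^b(C^{\op})$ yields one for $B$ and $C$. By Rickard's theory, derived equivalence is given by a tilting complex, and $A$ and $A'$ are derived equivalent if and only if $A^{\op}$ and $A'^{\op}$ are; one can also use the duality $D$ on finite-dimensional algebras. Alternatively, Step 2 can be done without opposites by using the version of Lemma~\ref{lem:Ladkani} with the division ring in the $R$-slot. The remaining routine check is that all bimodule actions match under the isomorphisms of Lemma~\ref{lem:swap-triangular}.
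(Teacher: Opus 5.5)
Your proof is correct, but it takes a detour that the paper avoids.

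\textbf{The obstacle in Step 2 is avoidable.} The problem you identify is that $\Lam$ sits in the $S$-slot once $B$ is written as $\smat{\Lam & M(q)\\ 0 & k}$. It disappears if you start from $C$ instead of $B$. By Lemma~\ref{lem:swap-triangular}, $C\cong\smat{k & D(M(q))\\ 0 & \Lam}$, with $D(M(q))$ a $k$-$\Lam$-bimodule. So Lemma~\ref{lem:Ladkani} applies directly with $S=k$, $R=\Lam$ and $N=D(M(q))$. Its output is $\smat{\Lam & D(D(M(q)))\\ 0 & k}$. By reflexivity $D(D(M(q)))\cong M(q)$, this is $\smat{\Lam & M(q)\\ 0&k}$, which is $B$ by your Step 1. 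This is exactly the paper's proof. It is also what your closing ``alternatively'' remark amounts to: a version with the field in the $R$-slot is just Lemma~\ref{lem:Ladkani} read backwards, with $N$ replaced by its dual.

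\textbf{What your route costs.} You apply the lemma to $B^{\op}\cong\smat{k & M(q)\\ 0 & \Lam^{\op}}$, which needs no double dual. In exchange it needs two extra ingredients:
\begin{enumerate}
\item the transposition identifications of $B^{\op}$ and $C^{\op}$ with upper triangular rings carrying the correct bimodule structures;
\item the fact that derived equivalence is preserved under $A\mapsto A^{\op}$. This is due to Rickard and can be seen, for example, via the dual tilting complex $\Hom_A(T,A)$.
\end{enumerate}
Both ingredients are true, and your identifications check out, so the argument is valid. The paper's version is simply shorter.

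\textbf{One small slip.} As a $\Lam^{\op}$-$k$-bimodule, $D(M(q))=\Hom_k(M(q),k)$ is a \emph{left} $\Lam^{\op}$-module, equivalently a right $\Lam$-module via $(f\lambda)(m)=f(\lambda m)$. You wrote ``right $\Lam^{\op}$-module'' twice. The action you actually use is the correct one, so this is only terminology.
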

\begin{proof}
Apply Lemma~\ref{lem:Ladkani} with $S=k$, $R=\Lam$ and $N=D(M(q))$ (viewed as a $k$--$\Lam$-bimodule).
This yields a derived equivalence between the upper triangular rings
\[
T_1:=\mat{k & D(M(q))\\ 0 & \Lam}
\qquad\text{and}\qquad
T_2:=\mat{\Lam & D(D(M(q)))\\ 0 & k}.
\]
Since $M(q)$ is finite-dimensional over $k$, we have $D(D(M(q)))\cong M(q)$, and hence
\(T_2\cong \mat{\Lam & M(q)\\ 0 & k}\).
By Lemma~\ref{lem:swap-triangular}, we have \(T_1\cong C\) and \(T_2\cong B\).
Hence $B$ and $C$ are derived equivalent.
\end{proof}

We next compute the delooping level and the sub-derived delooping level of $C$.
Let \(e_1=\smat{1&0\\0&0}\) and \(e_2=\smat{0&0\\0&1}\) in $C$, and set
$P_1:=Ce_1$, $P_2:=Ce_2$.
Write $S_1$ (resp.\ $S_2$) for the simple top of $P_1$ (resp.\ $P_2$).
Note that $P_2=Ce_2$ is simple and projective, hence $S_2\cong P_2$.
Thus $\del_C(S_2)=0$ by Definition~\ref{def:del}.
To compute $\del_C(S_1)$, we first note two elementary facts.

\begin{lemma}\label{lem:yx-simple-ideal}
The left ideal $\Lam(yx)$ is 1-dimensional over $k$.
\end{lemma}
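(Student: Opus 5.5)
The plan is to show two things: that $\Lam(yx)$ is spanned by $yx$, and that $yx\neq 0$ in $\Lam$.

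\emph{Spanning.} Since $\Lam$ is generated by $x,y,z$ as a $k$-algebra, $\Lam(yx)$ is spanned by $yx$ together with all products $w\cdot yx$, where $w$ runs over nonempty words in $x,y,z$. It therefore suffices to check that $x\cdot yx$, $y\cdot yx$ and $z\cdot yx$ all vanish in $\Lam$; longer words then vanish automatically. Using the defining relations:
\begin{itemize}
\item $xyx=-q\,yx^2=0$, by $xy=-qyx$ and $x^2=0$;
\item $y\cdot yx=y^2x=0$, by $y^2=0$;
\item $zyx=zx\cdot x=zx^2=0$, by $zy=zx$ and $x^2=0$.
\end{itemize}
Hence $\Lam(yx)=k\,yx$, and its dimension is at most $1$.

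\emph{Nonvanishing of $yx$.} This is the only real point. The modules $M(\alpha)$ are useless here, since $yx$ annihilates them. Instead I build a $5$-dimensional $\Lam$-module $W$ on which $yx$ acts nontrivially. Take the basis $w,a,b,c,d$ of $W$ and define the action of $x,y,z$ by
\[
xw=a,\quad yw=b,\quad zw=c,\quad ya=d,\quad xb=-q\,d,
\]
with all other basis vectors sent to $0$.

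I then check that each defining relation of $\Lam$ acts as zero on every basis vector. Any product of two generators sends $a,b,c,d$ to $0$, so only the action on $w$ needs checking:
\begin{itemize}
\item $x^2w=xa=0$, $\quad y^2w=yb=0$, $\quad z^2w=zc=0$, $\quad yzw=yc=0$;
\item $(xy+qyx)w=xb+q\,ya=-qd+qd=0$;
\item $(xz-zx)w=xc-za=0-0=0$;
\item $(zy-zx)w=zb-za=0-0=0$.
\end{itemize}
So $W$ is a $\Lam$-module. In it $yx\cdot w=ya=d\neq0$, hence $yx\neq0$ in $\Lam$, and $\dim_k\Lam(yx)=1$.
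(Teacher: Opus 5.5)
Your proof is correct. The spanning half is the same computation as in the paper: $x$, $y$, $z$ each kill $yx$. The difference lies in how you justify $yx\neq 0$.

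The paper asserts that the defining relations give the $k$-basis $1,x,y,z,yx,zx$ of $\Lambda$. It then also checks $(yx)(yx)=(zx)(yx)=0$, which your word argument shows is superfluous. It leaves the linear independence unverified, and that linear independence is precisely the nonvanishing of $yx$.

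You instead produce an explicit $\Lambda$-module $W$ with $yx\cdot w\neq 0$. Your relation checks are all correct; in fact $W$ is just $\Lambda/k\,zx$.

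Your route is self-contained and needs no normal-form computation. The paper's route yields more, namely the full $6$-dimensional structure of $\Lambda$, but only once the basis claim is actually proved.

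For comparison, the quickest complete justification is by grading. The relations are homogeneous quadratic, so the degree-$2$ part of $\Lambda$ is the $9$-dimensional space of quadratic monomials modulo the $7$-dimensional span of the relations. Each relation contains a monomial ($x^2,y^2,z^2,yz,xy,xz,zy$) occurring in no other relation, so $yx$ is visibly not in that span.
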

\begin{proof}
Using the defining relations of $\Lambda$ we compute
\begin{align*}
x(yx) &= (xy)x = (-q\,yx)x = -q\,y(x^2)=0,\\
y(yx) &= y^2x=0,\\
z(yx) &= (zy)x = (zx)x = z(x^2)=0.
\end{align*}
Thus $x,y,z$ annihilate $yx$ on the left. Moreover, the defining relations of $\Lambda$ show that $1,x,y,z,yx,zx$ form a $k$-basis for $\Lam$.
Since $x(yx)=0$, we also have $(yx)(yx)=y(x(yx))=0$ and $(zx)(yx)=z(x(yx))=0$.
Therefore $\Lam(yx)=k\cdot (yx)$ and hence $\dim_{k}\Lam(yx)=1$. 
\end{proof}

\begin{lemma}\label{lem:yx-annihilates-DM}
The element $yx\in\Lam$ annihilates $D(M(q))$ from the right: for all $f\in D(M(q))$, one has $f\cdot(yx)=0$.
\end{lemma}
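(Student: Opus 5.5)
The plan is to reduce the claim to a statement about the left action of $yx$ on $M(q)$ itself, using how the right $\Lam$-module structure on $D(M(q))=\Hom_k(M(q),k)$ is defined. That structure is the standard one: for $f\in D(M(q))$ and $\lambda\in\Lam$, the functional $f\cdot\lambda$ is given by $(f\cdot\lambda)(m)=f(\lambda m)$ for $m\in M(q)$. Consequently $f\cdot(yx)=0$ for every $f$ as soon as $(yx)m=0$ for every $m\in M(q)$. It therefore suffices to show that $yx$ annihilates $M(q)$.

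By linearity it is enough to check this on the basis $v,v',v''$. For $v$, the defining action gives $xv=q\,v'$, so
\[
(yx)v=y(xv)=q\,(yv')=0.
\]
This uses that $y$ kills $v'$. For $v'$ and $v''$, already $xv'=xv''=0$, so $(yx)v'=(yx)v''=0$. Hence $yx\cdot M(q)=0$. Dualizing, we get $f\cdot(yx)=f\circ(yx\cdot-)=0$ for all $f\in D(M(q))$.

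There is essentially no obstacle here. The only point that needs care is the convention for the bimodule structure on $D(M(q))$, namely that the right $\Lam$-action is precomposition with the left action on $M(q)$. Once that is fixed, the rest is the one-line computation above. It is worth remarking that the argument does not use the specific value $\alpha=q$: $yx$ annihilates every $M(\alpha)$, since $x$ maps $M(\alpha)$ into the span of $v'$, and $y$ kills $v'$.
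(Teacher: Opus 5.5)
Your proposal is correct and matches the paper's proof: both use $(f\cdot a)(m)=f(am)$ to reduce to $(yx)M(q)=0$ and then check this on the basis $v,v',v''$. Your closing remark is also correct: the argument works for every $M(\alpha)$, because $x$ maps into $k v'$ and $y$ kills $v'$.
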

\begin{proof}
The right $\Lam$-action on $D(M(q))=\Hom_{k}(M(q),k)$ is given by $(f\cdot a)(m)=f(am)$.
It suffices to show $(yx)M(q)=0$. Using the defining action of $M(q)$ we have
\[
\begin{aligned}
(yx)v &= y(xv)=y(qv')=0,\\
(yx)v' &= y(xv')=0,\\
(yx)v'' &= y(xv'')=0.
\end{aligned}
\]
Since $v,v',v''$ form a $k$-basis of $M(q)$, we have $(yx)M(q)=0$.
Hence $f\cdot(yx)=0$ for all $f\in D(M(q))$.
\end{proof}

\begin{proposition}\label{prop:S1-syzygy}
$\del_C(S_1)=0$.
\end{proposition}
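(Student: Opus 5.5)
The plan is to exhibit $S_1$ itself as a first syzygy. By Definition~\ref{def:del} with $n=0$, it suffices to find $N\in C\text{-}\mathrm{mod}$ with $\Omega_C(N)\cong S_1$. I will take $N:=P_1/U$, where $U\subseteq P_1$ is a one-dimensional submodule isomorphic to $S_1$ and contained in $\rad P_1$. Then the projection $P_1\twoheadrightarrow N$ is a projective cover with kernel $U\cong S_1$, so $\Omega_C(N)\cong S_1$ and $\del_C(S_1)=0$.

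\textbf{Structure of $P_1$.} First I describe $P_1=Ce_1$. It consists of the matrices $\smat{a&0\\ f&0}$ with $a\in\Lam$ and $f\in D(M(q))$. The left $C$-action is matrix multiplication, so
\[
\smat{\lambda&0\\ g&c}\smat{a&0\\ f&0}=\smat{\lambda a&0\\ ga+cf&0}.
\]
Since $\Lam$ is local, $e_1$ is a primitive idempotent. Hence $P_1$ is indecomposable projective with simple top $S_1$, and $S_1$ is the one-dimensional module on which $e_1$ acts as the identity and the radical acts as zero.

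\textbf{The submodule $U$.} Set $u:=\smat{yx&0\\0&0}\in P_1$ and $U:=Cu$. By the formula above,
\[
Cu=\Bigl\{\smat{\lambda\,yx&0\\ g\cdot(yx)&0}\ \Big|\ \lambda\in\Lam,\ g\in D(M(q))\Bigr\}.
\]
Lemma~\ref{lem:yx-annihilates-DM} gives $g\cdot(yx)=0$, and Lemma~\ref{lem:yx-simple-ideal} gives $\Lam(yx)=k\cdot yx$. Therefore $U=k\,u$ is one-dimensional. It is fixed by $e_1$, and it is annihilated by the radical $\smat{\rad\Lam&0\\ D(M(q))&0}$: the diagonal part acts by $x,y,z$ on $yx$, which gives $0$ by Lemma~\ref{lem:yx-simple-ideal}, and the off-diagonal part gives $0$ by Lemma~\ref{lem:yx-annihilates-DM}. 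So $U\cong S_1$. Moreover $yx\in\rad\Lam$, so $u=\smat{y&0\\0&0}\smat{x&0\\0&0}\in(\rad C)P_1=\rad P_1$.

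\textbf{Conclusion and main obstacle.} Put $N:=P_1/U$. Since $U\subseteq\rad P_1$ and $P_1$ is local projective, $P_1\twoheadrightarrow N$ is a projective cover. Hence $\Omega_C(N)\cong U\cong S_1$. Then $\Omega_C^0(S_1)=S_1$ is isomorphic to $\Omega_C^1(N)$, and in particular is a direct summand of it in $C\text{-}\underline{\mathrm{mod}}$, so $\del_C(S_1)=0$. The only delicate step is the careful bookkeeping of the left module structure on $Ce_1$. One must check that the $D(M(q))$-component of $Cu$ really vanishes, i.e.\ that the relevant action is the right $\Lam$-action $g\cdot(yx)$ on $D(M(q))$, which is exactly what Lemma~\ref{lem:yx-annihilates-DM} controls. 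The rest is formal.
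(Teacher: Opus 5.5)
Your proposal is correct and follows essentially the same route as the paper: both realize $S_1$ as the one-dimensional submodule $U=k\smat{yx&0\\0&0}\subseteq\rad P_1$, using the two lemmas for closure and dimension, and then read off $S_1\cong U\cong\Omega_C(P_1/U)$ from the projective cover $P_1\twoheadrightarrow P_1/U$. The differences are cosmetic. You generate $U$ as $Cu$ and place it in $\rad P_1$ by an explicit factorization, whereas the paper checks closure of the set directly and deduces $U\subseteq\rad P_1$ from the locality of $P_1$.
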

\begin{proof}
Let
\(U:=\{\smat{\lambda(yx)&0\\0&0}\mid \lambda\in\Lam\}\subseteq P_1.\)
To check that $U$ is a $C$-submodule of $P_1$, we take $c=\smat{a&0\\ f&s}\in C$ and $u=\smat{\lambda(yx)&0\\0&0}\in U$.
Then
\[
cu=\smat{a\lambda(yx)&0\\ f\cdot\lambda(yx)&0}=\smat{a\lambda(yx)&0\\0&0}\in U,
\]
since $f\cdot\lambda(yx)=(f\cdot\lambda)\cdot(yx)=0$ by Lemma~\ref{lem:yx-annihilates-DM}.

As a $k$-vector space, $U\cong \Lam(yx)$.
By Lemma~\ref{lem:yx-simple-ideal}, $\Lam(yx)$ is 1-dimensional,
so $U$ is a simple $C$-module.
Since $e_2U=0$ but $e_2S_2=S_2\neq0$, it follows that $U\cong S_1$.

Since $P_1$ is indecomposable projective, it is local. 
As $U\subsetneq P_1$, we have $U\subseteq \rad(P_1)$.
Hence $P_1\twoheadrightarrow P_1/U$ is a projective cover, and the short exact sequence
$0\to U\to P_1\to P_1/U\to 0$ yields $U\cong \Omega_C(P_1/U)$.
Thus $S_1\cong U$ is a syzygy, and $\del_C(S_1)=0$ by Definition~\ref{def:del}.
\end{proof}

\begin{corollary}\label{cor:C-zero}
 \(\del(C)=\sddel(C)=0.\)
\end{corollary}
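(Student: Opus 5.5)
The corollary follows by combining the computations for the two simple $C$-modules with Remark~\ref{rem:sddel-le-del}. First I will check that $S_1$ and $S_2$ are, up to isomorphism, all the simple $C$-modules. The ring $\Lam$ is local, since it is generated by the nilpotent elements $x,y,z$ and has the basis $1,x,y,z,yx,zx$ from Lemma~\ref{lem:yx-simple-ideal}, so its radical has codimension one. Hence $e_1Ce_1\cong\Lam$ and $e_2Ce_2\cong k$ are local, $P_1=Ce_1$ and $P_2=Ce_2$ are indecomposable, and $C=P_1\oplus P_2$. Every simple $C$-module is therefore a quotient of $P_1$ or of $P_2$, so it is isomorphic to $S_1$ or $S_2$.

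Next I record the delooping levels of the simples. Since $S_2\cong P_2$ is projective, $\del_C(S_2)=0$, as already noted before Lemma~\ref{lem:yx-simple-ideal}. Proposition~\ref{prop:S1-syzygy} gives $\del_C(S_1)=0$. Taking the supremum over the simple modules yields $\del(C)=0$.

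For the sub-derived delooping level, Remark~\ref{rem:sddel-le-del} gives $0\le\sddel_C(S)\le\del_C(S)=0$ for each simple $S$, because $\sddel_C$ is an infimum of delooping levels, which are non-negative. Hence $\sddel(C)=0$.

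The only point needing any care is that the list of simple modules is complete. This reduces to the locality of $\Lam$, which is immediate from its defining presentation, so I expect no real obstacle. Together with Lemma~\ref{lem:B-infinite} and Corollary~\ref{cor:derived-equivalent}, this completes the proof of Theorem~\ref{thm:main}.
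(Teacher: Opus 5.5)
Your proposal is correct and matches the paper's proof: $\del_C(S_2)=0$ because $S_2$ is projective, $\del_C(S_1)=0$ by Proposition~\ref{prop:S1-syzygy}, so $\del(C)=0$, and then $\sddel(C)=0$ by Remark~\ref{rem:sddel-le-del}. Your extra check that $S_1,S_2$ are all the simple $C$-modules, using that $\Lam$ is local, is sound; the paper leaves this point implicit.
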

\begin{proof}
By Proposition~\ref{prop:S1-syzygy}, we have $\del_C(S_1)=0$.
Since $S_2$ is projective, $\del_C(S_2)=0$ by Definition~\ref{def:del}.
Hence $\del(C)=0$.
Moreover, $\sddel(C)\le \del(C)=0$ by Remark~\ref{rem:sddel-le-del}, and thus $\sddel(C)=0$.
\end{proof}

\begin{proof}[Proof of Theorem~\ref{thm:main}]
Let $B$ and $C$ be the algebras defined above.
Lemma~\ref{lem:B-infinite} gives $\del(B)=\sddel(B)=\infty$.
By Corollary~\ref{cor:derived-equivalent}, the algebras $B$ and $C$ are derived equivalent.
On the other hand, Corollary~\ref{cor:C-zero} yields $\del(C)=\sddel(C)=0$.
This proves Theorem~\ref{thm:main}.
\end{proof}

\paragraph*{Acknowledgements.} The author is grateful to Professor Changchang Xi for helpful discussions.


\end{document}